 \newtheorem{thm}{Theorem}[section]
 \newtheorem{cor}[thm]{Corollary}
 \newtheorem{lem}[thm]{Lemma}{\rm}
 \newtheorem{prop}[thm]{Proposition}
 \newtheorem{rem}[thm]{Remark}
 \numberwithin{equation}{section}
\def\x{\mathbf{x}}
\def\A{\mathbf{A}}
\def\H{\mathbf{H}}
\def\X{\mathbf{X}}
\def\v{\mathbf{v}}
\def\x{\mathbf{x}}
\def\N{\mathbb{N}}
\def\R{\mathbb{R}}
\def\om{\mathbf{\Omega}}
\def\bphi{\boldsymbol{\phi}}
\def\lf{\underline{f}}
\def\uf{\overline{f}}
\def\utheta{\overline{\theta}}
\def\otheta{\underline{\theta}}
\def\om{\mathbf{\Omega}}
\begin{document}
\title[Connecting optimization with tri-diagonal moment matrices]{Connecting optimization with spectral analysis of tri-diagonal matrices}
\thanks{This work was supported by the European Research Council (ERC)
under the ERC-Advanced Grant for the TAMING project, and also by
the AI Interdisciplinary Institute ANITI through the French ``Investing for the Future PIA3'' program under the Grant agreement n$^{\circ}$ANR-19-PI3A-0004.
}
\author{Jean B. Lasserre}
\begin{abstract}
We  show that the
global minimum (resp. maximum) of a continuous function on a compact set can be approximated from above (resp. from below) 
by computing the smallest (rest. largest) eigenvalue of a hierarchy
of $(r\times r)$ tri-diagonal matrices of increasing size. Equivalently it reduces to computing
the smallest (resp. largest) root of a certain univariate degree-$r$ orthonormal polynomial.
This provides a strong connection 
between the fields of optimization, orthogonal polynomials, numerical analysis and linear algebra, via asymptotic 
spectral analysis of tri-diagonal symmetric matrices. 
 \end{abstract}
\maketitle

\section{Introduction}
\label{section-1}
The goal of this paper is show that the global minimum (resp. maximum) of a continuous function $f$ on a compact set $\om\subset\R^n$
can be approximated as closely as desired from above (resp. from below) by the smallest 
eigenvalues $(\tau^\ell_r)_{r\in\N}$ (resp. largest eigenvalues $(\tau^u_r)_{r\in\N}$) of a sequence of tri-diagonal  univariate ``moment" matrices of 
increasing size $r$. Equivalently, $\tau^\ell_r$  (resp. $\tau^u_r$) is
the smallest (resp. largest) root of a univariate orthogonal polynomial of increasing degree $r$.
Thus it reveals a (perhaps suprising) connection between the fields
of optimization and the asymptotic spectral analysis of tri-diagonal symmetric matrices
(also related to the asymptotic analysis of univariate orthogonal polynomials). 

Notice that there is a large body of  literature on numerical analysis of tri-diagonal
symmetric matrices for which efficient  specialized algorithms exist 
(for instance the characteristic polynomial can be computed efficiently and roots of univariate polynomials
can also be computed efficiently); see e.g. \cite{ford, routh}. 

Our result is valid in a quite general context, namely if $f$ is a continuous function and $\om$ is compact.
However to turn it into a \emph{practical} algorithm requires effective computation of  such matrices. 
This in turn requires computing integrals 
$\int_\om f(\x)^kd\lambda$ for arbitrary $k$, where $\lambda$ is a measure 
whose support is exactly $\om$ (and whose choice is left to the user). If  
$f$ is a polynomial, $\om$ is a ``simple" compact set like a box, an ellipsoid, a simplex, a sphere, the discrete hypercube $\{0,1\}^n$
(or their image by an affine mapping) and $\lambda$ is a distinguished measure
(e.g., Lebesgue measure (or other families of polynomial densities), rotation invariant (for the sphere), counting measure for 
$\{0,1\}^n$) then such integrals are obtained in closed-form.

In addition, after the initial version of this paper was released, Laurent and Slot \cite{laurent-slot} proved 
a $O(\log^2 r/r^2)$ rate of convergence for the monotone sequence of upper bounds $(\tau^\ell_r)_{r\in\N}$ on the minimum (resp. lower bounds $(\tau^u_r)_{r\in\N}$ on the maximum)  under a weak geometric condition on $\om$ (satisfied by the above simple sets).

\subsection*{Background} Let $f:\om\to\R$ be a continuous function on a compact set $\om\subset\R^n$,
and consider the optimization problem:
\begin{equation}
\label{pb-opt}
\lf\,=\,\displaystyle\inf_\x\,\{\,f(\x):\,\x\in\om\,\};\quad
\uf\,=\,\displaystyle\sup_\x\,\{\,f(\x):\,\x\in\om\,\}.
\end{equation}

In \cite{lass-siopt} the author showed that one may approximate $\lf$ from above 
(resp. $\uf$ from below) by solving the following hierarchy\footnote{The hierarchy 
\eqref{pb-initial-inf} should not be confused with the Moment-SOS hierarchy described in \cite{lass-book} 
to solve the same problem \eqref{pb-opt}, but which yields a converging sequence of {\em lower bounds} $\rho_r\uparrow \lf$ as $r\to\infty$.} of
optimization problems indexed by $r\in\N$:
\begin{eqnarray}
\label{pb-initial-inf}
\otheta_r\:\,=\,\inf_{\sigma}\:\{\,\int f\,\sigma\,d\lambda:\:\int \sigma\,d\lambda\,=\,1; \quad \sigma\in\Sigma[\x]_r\,\}\\
\label{pb-initial-sup}
\utheta_r\:\,=\,\sup_{\sigma}\:\{\,\int f\,\sigma\,d\lambda:\:\int \sigma\,d\lambda\,=\,1; \quad \sigma\in\Sigma[\x]_r\,\},
\end{eqnarray}
where $\lambda$ is a fixed measure whose support is exactly $\om$ and $\Sigma[\x]_r$ is the convex cone of SOS polynomials of degree at most $2r$. Indeed $\otheta_r\downarrow\lf$ (resp. $\utheta_r\uparrow\uf$) as $t\to\infty$. 
If $f$ is a polynomial and one knows all moments of the measure $\lambda$ on $\om$,
then each problem \eqref{pb-initial-inf} is a very specific {\em semidefinite program}.
As a matter of fact it reduces to a generalized eigenvalue problem involving two moment-like 
matrices whose size ${n+r\choose r}$ increases with $r$. For instance this is the case whenever 
$\om$ is a simple set (e.g. the box $[-1,1]^n$, the Euclidean unit ball, the sphere, the simplex, the discrete hypercube, and their affine transformation)
and $\lambda$ is an appropriate measure (e.g., Lebesgue measure on unit box, Euclidean unit ball, simplex, rotation invariant measure on the sphere, counting measure on the discrete hypercube, etc.).

In a recent series of papers \cite{las-1,las-2,las-3,las-4,las-5}, de Klerk, Laurent and their co-workers 
have been able to analyze 
the convergence $\otheta_r\downarrow\lf$ of such a hierarchy by appropriate clever choices of the reference measure (as indeed $\lambda$ it can be any measure whose support is exactly $\om$). 
Ultimately they could provide rates of convergence. In particular and remarkably, they 
show that for certain important sets and reference measures (e.g. the box $\om=[-1,1]^n$ and the sphere $\mathbb{S}^{n-1}$) a convergence rate $O(1/r^2)$ is achieved.

However all variants of \eqref{pb-initial-inf} and \eqref{pb-initial-sup} 
consider density polynomials $\sigma\in\R[\x]$
in $n$ variables which results in eigenvalue problems with multivariate Hankel-type matrices 
whose size ${n+r\choose n}$ grows very quickly with $r$.
Therefore and so far, this hierarchy of upper (resp. lower) bounds has not been proved to be efficient in practice (even for small  size problems) and its main interest is rather theoretical as it provides an algorithm with proven 
rate of convergence $O(1/r^2)$ to the global optimum on some simple sets.

\subsection*{Contribution}

Our contribution is twofold:\\

{\bf I.} We first provide an alternative converging  hierarchy of upper bounds in the same spirit as
\eqref{pb-initial-inf} and \eqref{pb-initial-sup} but following a different path. 
{\em The main distinguishing feature is to reduce the 
$n$-dimensional initial problem to a one-dimensional equivalent problem
by using the pushforward measure $\#\lambda$ of the measure $\lambda$ on $\om$,
by the mapping $f:\om\to\R$. It results in solving again a hierarchy of eigenvalue problems
but with the major advantage of considering Hankel moment matrices in just ONE variable (hence of size $r$
in contrast to ${n+r\choose n}$).}

To achieve this result we exploit the fact that $\lf$ and $\uf$ are the left and right endpoints 
of the support of $\#\lambda$ on the real line, and therefore
by invoking Lasserre \cite[Theorem 3.3]{lass-pams}, one may approximate $\lf$ from above by solving:
\begin{eqnarray}
\nonumber
\tau^\ell_r&=&\sup\,\{a:\, \H_r(x;\#\lambda)\,\succeq\,a\,\H_r(\#\lambda)\,\}\\
\label{practical}
&=&\lambda_{\min}(\H_r(x;\#\lambda),\H_r(\#\lambda)),\qquad \forall r\in\N,
\end{eqnarray}
where the real symmetric 
$(r+1)\times (r+1)$ matrix $\H_r(\#\lambda)$ (resp. $\H_r(x;\#\lambda)$)
is the moment matrix of the pushforward $\#\lambda$ (resp. 
the localizing matrix associated with $\#\lambda$ and the univariate linear polynomial $x\mapsto x$).
Similarly, one may approximate $\uf$ from below by solving:
\begin{eqnarray*}
\tau^u_r&=&\inf\,\{a:\, a\,\H_r(\#\lambda)\,\succeq\,\H_r(x;\#\lambda)\,\}\\
&=&\lambda_{\max}(\H_r(x;\#\lambda),\H_r(\#\lambda)),\qquad \forall r\in\N,
\end{eqnarray*}
and indeed $\tau^\ell_r\downarrow\lf$ (resp. $\tau^u_r\uparrow \uf$) as $r$ increases; see \cite{lass-pams}.

\begin{rem}
{\rm Equivalently, by duality in convex optimization:
\begin{equation}
\label{aux1}
\tau^\ell_r\,=\,\inf_\sigma\,\{\, \int z\,\sigma\,d\#\lambda:\:\int \sigma\,d\#\lambda=1;\quad\sigma\in\Sigma[z]_r\,\},
\end{equation}
where $\Sigma[z]_r$ is the convex cone of {\em univariate} SOS polynomials of degree at most $2r$; similarly
for $\tau^u_r$ just replace ``$\inf$" by ``$\sup$".
The formulation \eqref{aux1} resembles \eqref{pb-initial-inf} but
with the important difference that in \eqref{pb-initial-inf} one searches over SOS polynomials
of degree at most $r$  in $n$-variables whereas in \eqref{aux1} one searches over SOS polynomials of degree at most $r$ in ONE variable. 

Also notice  that in contrast to \eqref{pb-initial-inf}, in \eqref{aux1}  (or in \eqref{practical}) the function $f$ to minimize 
does {\em not} appear explicitly; it is encoded in the pushforward measure $\#\lambda$.
So if one is able to compute (or approximate) the moments of $\#\lambda$, then 
both matrices $\H_r(x;\,\#\lambda)$ and $\H_r(\#\lambda)$ are known and 
\eqref{practical} can be solved in practice. Also to solve \eqref{pb-initial-inf} in practice one needs $f$
to be a polynomial.
}\end{rem}

{\bf II.} We next further simplify the analysis by considering an orthonormal basis $(T_j)_{j\in\N}$ of polynomials w.r.t.
the pushforward measure $\#\lambda$. Recall that a family of $T_j$'s, $j\leq r$, can be obtained from the moment matrix 
$\H_r(\#\lambda)$ by simple determinant computations.
In this new basis $(T_j)_{j\in\N}$, the moment matrix $\hat{\H}_r(\#\lambda)$ becomes the identity
and the localizing matrix $\hat{\H}_r(x;\,\#\lambda)$ now becomes a {\em tri-diagonal} (banded) symmetric matrix
whose coefficients have a direct expression in terms of the parameters defining the classical three-term recurrence relation 
satisfied by the $T_j$'s. 

{\em Therefore the convergence $\tau^\ell_r\downarrow \lf$ (resp. $\tau^u_r\uparrow \uf$) is simply  
the asymptotic behavior of the smallest (resp. largest) eigenvalue of tri-diagonal $r\times r$ 
``moment" matrices, as $r$ increases; equivalently the asymptotic behavior of the smallest
(rest. largest) root of a certain univariate polynomial orthogonal w.r.t. $\#\lambda$.}
 
This reveals a strong and perhaps surprising connection between the fields of (global) 
optimization and the spectral analysis of tri-diagonal Hankel matrices (or extremal roots of a family of orthogonal polynomials). 
Actually such a link already appeared in de Klerk and Laurent \cite{0} to analyze convergence of upper bounds \eqref{pb-initial-inf}
for the specific univariate (trivial) optimization problem $\min\,\{x: x\in [-1,1]\}$ and specific reference measure $\lambda$. Then they used 
this univariate problem as  a building block to prove rates of convergence of the bounds in \eqref{pb-initial-inf}
in case where $\om=[-1,1]^n$ or $\mathbb{S}^{n-1}$ (and with specific measures $\lambda$).

There is a large body of literature on various aspects of tri-diagonal symmetric matrices, including practical efficient algorithms;
see for instance Aurentz et al. \cite{aurentz}, Businger \cite{businger}, Ford \cite{ford}, Kili\c{c} \cite{kilos}, 
Mallik \cite{mallik}, Osipov \cite{osipov}, and Routh \cite{routh}.
Therefore this may also open the door to practical algorithms for good approximations of $\lf$ and $\uf$
in non trivial cases, as soon as one can obtain moments of the measure $\#\lambda$ for reasonably large degrees.

If the sequence  $(\tau^\ell_r)_{r\in\N}$ (or $(\tau^u_r)_{r\in\N}$) has 
obvious numerical advantages when compared with $(\otheta_r)_{r\in\N}$ (or $(\utheta_r)_{r\in\N}$)
for a same fixed $r$, the main drawback of $(\tau^\ell_r)_{r\in\N}$ is the computation
of moments of $\#\lambda$ which may become tedious for non modest dimension $n$.
However, {\em sparsity} of $f$ (i.e. when $f$ has a few monomials only) can be exploited. 
Also the rate of convergence for $\tau^\ell_r\downarrow \lf$ as $r\to\infty$,
is more difficult to analyze because we do not know the density of the pushforward measure $\#\lambda$ with respect to Lebesgue measure on $\om$.

Actually, recently Laurent and Slot \cite{laurent-slot}
have provided a further detailed analysis of some relative merits of the sequences
$(\tau^\ell_r)_{r\in\N}$ and $(\otheta_r)_{r\in\N}$ beyond the scope the present paper. As already mentioned, 
they prove that remarkably, $\tau^\ell_r\downarrow \lf$ at a $O(\log^2 r/r^2)$ rate under a weak geometric condition on $\om$
(satisfied for the simple sets $\om$ mentioned previously).

\section{Main result}

\subsection{Notation, definitions and preliminary results}
\label{definitions}
Let $\R[\x]$ denote the ring of polynomials in the $n$ variables $\x=(x_1,\ldots,x_n)$
and $\R[\x]_t\subset\R[\x]$ denote the vector space of polynomials  of degree at most $t$, hence of dimension $s(t)={n+t\choose n}$.
Let $\Sigma[\x]\subset\R[\x]$ denote the space of polynomials that are sums-of-squares (in short SOS polynomials) and
let $\Sigma[\x]_t\subset\R[\x]_{2t}$ denote the space of SOS polynomials of degree at most $2t$.
For univariate polynomials in the variable $x$, we use the notation $\R[x]$, $\Sigma[x]$, $\R[x]_t$ and $\Sigma[x]_t$.

With $\alpha\in\N^n$ and $\x\in\R^n$, the notation $\x^\alpha$ stands for $x_1^{\alpha_1}\cdots x_n^{\alpha_n}$. Also for every $\alpha\in\N^n$, let $\vert\alpha\vert:=\sum_i\alpha_i$
and $\N^n_t:=\{\alpha\in\N^n:\vert\alpha\vert\leq t\}$, where $\N=\{0,1,2,\ldots\}$.

The support of a Borel measure $\mu$ on $\R^n$ is the smallest closed set $\om$ such that 
$\mu(\R^n\setminus\om)=0$. Denote by $\mathcal{B}(\X)$ the Borel $\sigma$-field associated with a topological space $\X$, and $\mathscr{M}(\X)_+$ the space of finite nonnegative Borel measures on $\X$.

\subsection*{Generalized eigenvalue} Given two real symmetric matrices $\A,\mathbf{C}\in\R^{n\times n}$ denote by 
$\lambda_{\min}(\A,\mathbf{C})$ 
the smallest generalized eigenvalue with respect to the pair $(\A,\mathbf{C})$, that is, the smallest scalar $\tau$ such that $\A\x=\tau\,\mathbf{C}\x$ for some nonzero vector $\x\in\R^n$.
When $\mathbf{C}$ is the identity matrix then $\lambda_{\min}(\A,\mathbf{C})$ is just the smallest eigenvalue of $\A$. Computing $\lambda_{\min}(\A,\mathbf{C})$ can be done via a pure and efficient linear algebra routine.
The notation $\A\succeq0$ (resp. $\A\succ0$) stands for $\A$ is positive semidefinite (resp. positive definite). 
If $\A,\mathbf{C}\succ0$ then:
\begin{equation}
\label{gen-eig-sdp}
 \lambda_{\min}(\A,\mathbf{C})\,=\,\sup_\tau\{\,\tau: \A\succeq\tau \,\mathbf{C}\,\}.
 \end{equation}

\subsection*{Moment matrix}
Given a real sequence $\bphi=(\phi_\alpha)_{\alpha\in\N^n}$, let
$\H_r(\bphi)$ denote the multivariate (Hankel-type) moment matrix defined by $\H_r(\bphi)(\alpha,\beta)=\phi_{\alpha+\beta}$ for all $\alpha,\beta\in\N^n_r$. 
For instance, in the univariate case $n=1$, with $r=2$, $\H_2(\bphi)$ is the Hankel matrix
\[\H_2(\bphi)\,=\,\left[\begin{array}{ccc} \phi_0 &\phi_1& \phi_2\\
\phi_1 &\phi_2& \phi_3\\
\phi_2 &\phi_3 &\phi_4\end{array}\right].\]
If $\bphi=(\phi_j)_{j\in\N}$ is the moment sequence of a Borel 
measure $\phi$ on $\R$ then $\H_r(\bphi)\succeq0$ for all $r=0,1,\ldots$. Conversely,
if $\H_r(\bphi)\succeq0$ for all $r\in\N$, then $\bphi$ is the moment sequence of some finite (nonnegative) Borel measure $\phi$ on $\R$. The converse result is not true anymore in the multivariate case.

\subsection*{Localizing matrix}
Given a real sequence $\bphi=(\phi_\alpha)_{\alpha\in\N^n}$ and $g\in\R[\x]$, $\x\mapsto g(\x)=\sum_\alpha g_\alpha\,\x^\alpha$,
$\H_r(g\,;\bphi)$ denote the multivariate (Hankel-type) localizing matrix defined by 
\[\H_r(g\,;\bphi)(\alpha,\beta)\,=\,\sum_\gamma g_\gamma\,\phi_{\gamma+\alpha+\beta},\quad \forall \alpha,\beta\in\N^n_.\]
For instance, in the univariate case $n=1$, with $r=2$ and $g(x)=x$, $\H_2(g\,;\bphi)$ is the Hankel matrix
\[\H_2(g\,;\bphi)\,=\,\left[\begin{array}{ccc} \phi_1 &\phi_2& \phi_3\\
\phi_2 &\phi_3& \phi_4\\
\phi_3 &\phi_4 &\phi_5\end{array}\right].\]
Equivalently, if $\bphi$ is the moment sequence of a Borel 
measure $\phi$ on $\R^n$ then $\H_r(g\,;\bphi)$ is the moment matrix $\H_r(\boldsymbol{\nu})$ associated with the measure $d\nu=g\,d\phi$.

\subsection*{Pushforward measure}

Let $\lambda\in\mathscr{M}(\om)_+$ be 
a finite Borel measure on $\om\subset\R^n$ whose support is exactly $\om$, that is, $\om$ is 
the smallest closed set such that $\lambda(\R^n\setminus\om)=0$.
Let $\#\lambda$ be the {\em pushforward measure} on $\R$ of $\lambda$ with respect to (w.r.t.) the mapping
$f:\om\to \R$. That is:
\[\#\lambda(C)\,=\,\lambda(f^{-1}(C)),\qquad\forall C\in\mathcal{B}(\R).\]
In particular, its moments $\boldsymbol{\#\lambda}=(\#\lambda_k)_{k\in\N}$ read:
\begin{equation}
\label{mom-formula}
\#\lambda_k\,=\,\int_{[0+\infty)}z^k\,d\#\lambda(z)\,=\,\int_{\om}f(\x)^k\,d\lambda(\x),\qquad k=0,1,\ldots\end{equation}
It is straightforward to see that the support of $\#\lambda$ is contained in the interval $[\lf,\uf]$ with $\lf$ and $\uf$
as its left and right endpoints.
\begin{rem}[Computing the moments $\boldsymbol{\#\lambda}$]
\rm{While \eqref{mom-formula} is quite general, exact numerical computation of $\#\lambda_k$  is \emph{not} possible in full generality.
However, for specific combinations of ($f$, $\om$, $\lambda$) one may obtain $\boldsymbol{\#\lambda}$ in closed-form 
(let alone combinations where numerical approximations schemes, e.g. cubature formula or Monte-Carlo, can be used).
This is the case when $f$ is a polynomial and $\om$ is a ``simple" set like a box, an ellipsoid, a  simplex, a sphere, the discrete hypercube $\{0,1\}^n$,
or their image by an affine map. Then several choices of $\lambda$ are possible: 
Lebesgue measure (or specific families of polynomial densities) if $\om$
is a box, simplex or ellipsoid, the rotation invariant measure if $\om$ is the sphere, 
the discrete counting measure if $\om$ is the hyper cube, etc. 
For instance, if $\om=[-1,1]^n$ and $\lambda$ is Lebesgue measure then writing
\[f(\x)^k\,=\,\sum_{\alpha\in\N^n_{2kd}}f_{k\alpha}\,\x^\alpha,\quad k=0,1,\ldots,\]
one obtains
\[\#\lambda_k\,=\,\sum_{\alpha\in\N^n_{2kd}}f_{k\alpha}\,\displaystyle\prod_{i=1}^n \left(\int_{-1}^1x^{\alpha_i}\,dx\right),\quad k\in\N.\]
Similarly if $\om=\{\x:\Vert\x\Vert_2^2\leq1\,\}$ then for every $k\in\N$:
\begin{eqnarray*}
\#\lambda_k=\int_\om g(\x)^k\,d\lambda&=&\frac{1}{\Gamma(1+\frac{n+kt}{2})}\int_{\R^n}f(\x)^k\exp(-\Vert\x\Vert_2^2)\,d\lambda\\
&=&\frac{1}{\Gamma(1+\frac{n+kt}{2})}\sum_{\vert\alpha\vert=kt}f_{k\alpha}\prod_{i=1}^n
\int_\R x^{\alpha_i}\exp(-x^2)\,dx,
\end{eqnarray*}
which is also obtained in closed form. Notice that for large $k$, computing $\#\lambda_k$ can be time and space consuming.}
\end{rem}
\begin{rem}[On compactness assumption]
{\rm In fact non compact sets, e.g., 
$\om=\R^n$ with $\lambda$ being the normal distribution $\mathcal{N}(0,I)$, or $\om=\R^n_+$ with $\lambda$ being the exponential 
distribution $d\lambda=\prod_i\exp(-x_i)\,d\x$, can be treated as well; see 
\cite{lass-siopt}. 

In particular, and interestingly, it permits to provide upper bounds that converge to the global minimum $f^*$  of $f$ on $\R^n$, even if
$f^*$ is not attained (a difficult case in practice because then $\Vert\x\Vert\to\infty$ if $f(\x)\to f^*$). For instance 
with $f(x,y)=(xy-1)^2+y^2$ on $\R^2$, $f^*=0$ is not attained and $f(n,1/n)\to 0$ as $n\to\infty$. Then letting
$d\lambda=(2\pi)^{-1}\exp(-(x^2-y^2)/2)dxdy$, the pushforward $\#\lambda$ is supported on $[0,+\infty)$, and
\[\#\lambda_k\,=\,\frac{1}{2\pi}\int_{\R^2} ((xy-1)^2+y^2)^k\exp(-x^2/2)\,\exp(-y^2/2)\,dxdy,\quad k\in\N,\]
is obtained in closed-form. However, for ease of exposition and to avoid technicalities, we restrict to the compact case.}
\end{rem}
\subsection{Main result}

We have just seen that the support of the pushforward $\#\lambda$ is precisely contained
in the interval $[\lf,\uf]$, with $\lf,\uf$ as its left and right endpoints.
Therefore the problem of approximating $\lf$ and $\uf$ reduces
to approximate the endpoints of the support of $\#\lambda$ from the sole knowledge of its moments. 
That is,
\begin{equation}
\label{equiv}
\lf\,=\,\min\,\{\,x: x\,\in\,{\rm supp}(\#\lambda)\,\};\quad 
\uf\,=\,\max\,\{\,x: x\,\in\,{\rm supp}(\#\lambda)\,\}.
\end{equation}
In \cite{lass-pams} the author has already considered the more general problem of 
bounding the support of a measure 
$\mu$ on $\R^n$ from knowledge of its marginal moments. In our case 
$\mu$ is the push forward measure $\#\lambda$ on $\R$ and therefore we can invoke Theorem 3.3 in \cite{lass-pams}. More precisely:

Let $\om\subset\R^n$ be compact with nonempty interior, $\lambda\in\mathscr{M}(\om)_+$
and consider the hierarchy of optimization problems indexed by $r\in\N$:
\begin{eqnarray}
\label{relax-inf}
\tau^\ell_r&=&\sup_a\,\{\,a:\:\H_r(x;\,\#\lambda)\,\succeq\,a\,\H_r(\#\lambda)\,\}\\
\label{relax-sup}
\tau^u_r&=&\inf_a\,\{\,a:\:a\,\H_r(\#\lambda)\,\succeq\,\H_r(x;\,\#\lambda)\,\},
\end{eqnarray}
where $\H_r(x;\,\#\lambda)$ is the (univariate) localizing matrix associated with the polynomial $x\mapsto x$ and the measure $\#\lambda$ on $\R$, and
$\H_r(\#\lambda)$ is the (univariate) moment matrix associated with $\#\lambda$.

\begin{thm}
\label{th1}
Let $\lambda\in\mathscr{M}(\om)_+$ be such that ${\rm supp}(\lambda)=\om$. Then:
\begin{equation}
\label{th1-1}
\lf\leq \tau^\ell_r\,\leq\,\tau^u_r\,\leq\,\uf \quad \forall r\in\N.
\end{equation}
In addition the sequence $(\tau_r^\ell)_{r\in\N}$ (resp. $(\tau_r^u)_{r\in\N}$)
is monotone non-increasing (resp. non-decreasing), and:
\begin{equation}
\label{th1-2}
\lim_{r\to\infty}\tau^\ell_r\,=\,\lf\,;\quad\lim_{r\to\infty}\tau^u_r\,=\,\uf.
\end{equation}
Finally, for all $r\in\N$:
\begin{eqnarray}
\label{th1-3}
\tau^\ell_r&=&\lambda_{\min}(\H_r(x;\,\#\lambda),\H_r(\#\lambda))\\
\label{th1-4}
\tau^u_r&=&\lambda_{\max}(\H_r(x;\,\#\lambda),\H_r(\#\lambda)).
\end{eqnarray}
\end{thm}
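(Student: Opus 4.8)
The plan is to reduce everything to the one-dimensional statement already available in \cite{lass-pams}. The key observation, recorded in \eqref{mom-formula} and \eqref{equiv}, is that the pushforward measure $\#\lambda$ on $\R$ has compact support whose left and right endpoints are exactly $\lf$ and $\uf$; indeed, since $\lambda$ has support equal to $\om$ and $f$ is continuous on the compact set $\om$, the image $f(\om)$ is a compact subset of $[\lf,\uf]$ containing both endpoints, and $\#\lambda$ assigns positive mass to every relatively open subset of $f(\om)$, so $\mathrm{supp}(\#\lambda)\subset[\lf,\uf]$ with $\lf$ and $\uf$ attained. First I would make this endpoint identification precise, checking in particular that $\om$ having nonempty interior together with $\mathrm{supp}(\lambda)=\om$ guarantees $\#\lambda$ is not a finite atomic measure supported on too few points, so that the Hankel matrices $\H_r(\#\lambda)$ are positive definite for all $r$ (this is needed for \eqref{gen-eig-sdp} and hence for \eqref{th1-3}--\eqref{th1-4}).

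Next I would invoke \cite[Theorem 3.3]{lass-pams} applied to the single measure $\mu:=\#\lambda$ on $\R$. That result states precisely that, given the moments of a measure on $\R$ with compact support $[\ell,u]$, the quantities $\sup\{a:\H_r(x;\mu)\succeq a\,\H_r(\mu)\}$ form a non-increasing sequence converging to $\ell$, and dually $\inf\{a:a\,\H_r(\mu)\succeq\H_r(x;\mu)\}$ form a non-decreasing sequence converging to $u$. Substituting $\ell=\lf$, $u=\uf$ and $\mu=\#\lambda$ yields \eqref{th1-1} and \eqref{th1-2} directly, as well as the monotonicity claims. The inner inequality $\tau^\ell_r\le\tau^u_r$ in \eqref{th1-1} is immediate once both are characterized as generalized eigenvalues of the same pencil $(\H_r(x;\#\lambda),\H_r(\#\lambda))$, since the smallest generalized eigenvalue never exceeds the largest.

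For the eigenvalue reformulations \eqref{th1-3}--\eqref{th1-4}, I would use \eqref{gen-eig-sdp}: once $\H_r(\#\lambda)\succ0$ and $\H_r(x;\#\lambda)$ is symmetric, the semidefinite characterization $\sup\{a:\H_r(x;\#\lambda)\succeq a\,\H_r(\#\lambda)\}$ equals the smallest $a$ with $\det(\H_r(x;\#\lambda)-a\,\H_r(\#\lambda))=0$, i.e.\ $\lambda_{\min}$ of the pencil; the dual statement gives $\lambda_{\max}$. The congruence $\H_r(\#\lambda)=\L\L^{\top}$ reduces the pencil to a standard symmetric eigenproblem for $\L^{-1}\H_r(x;\#\lambda)\L^{-\top}$, which also makes transparent that the spectrum is real. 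Strictly speaking the reformulation \eqref{aux1} of the Remark, obtained by convex (SDP) duality between the matrix program and a program over univariate SOS densities $\sigma\in\Sigma[z]_r$, can be cited here as well to justify the dual form, but it is not logically needed for the theorem as stated.

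The main obstacle is the positive-definiteness of $\H_r(\#\lambda)$ for every $r$: this fails exactly when $\#\lambda$ is supported on at most $r$ points, which happens precisely when $f$ takes only finitely many values on $\mathrm{supp}(\lambda)=\om$. Since $\om$ has nonempty interior and $f$ is continuous (and, in the intended applications, a non-constant polynomial), $f(\om)$ is a non-degenerate interval or at least infinite, so $\H_r(\#\lambda)\succ0$ for all $r$; I would state this as a short lemma. In the borderline degenerate case ($f$ constant on $\om$, so $\lf=\uf$) the statement is trivial and the generalized-eigenvalue formulas should be read with the convention that the pencil is rank-deficient — worth a one-line remark but not a real difficulty.
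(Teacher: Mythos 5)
Your proposal is correct and takes essentially the same route as the paper: identify $\lf,\uf$ as the endpoints of ${\rm supp}(\#\lambda)$, invoke \cite[Theorem 3.3]{lass-pams} for \eqref{th1-1}--\eqref{th1-2} (the paper additionally gets monotonicity directly from the shrinking feasible sets in \eqref{relax-inf}--\eqref{relax-sup}), and then justify the generalized-eigenvalue identities \eqref{th1-3}--\eqref{th1-4}. The only variation is in that last step: the paper handles the possible indefiniteness of $\H_r(x;\,\#\lambda)$ by the shift $\H_r(x-c;\,\#\lambda)=\H_r(x;\,\#\lambda)-c\,\H_r(\#\lambda)$ with $c<\lf$ (Lemma \ref{lem-aux}), whereas you use the Cholesky congruence of the pencil with $\H_r(\#\lambda)\succ0$, which is an equally valid (and equally short) argument, and your explicit check that $\H_r(\#\lambda)\succ0$ when $f(\om)$ is infinite is a sensible addition that the paper leaves implicit.
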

\begin{proof}
That the sequence $(\tau^\ell_r)_{r\in\N}$ is monotone non-increasing is straightforward as
the feasible set in \eqref{relax-inf} shrinks with $r$. The same argument 
shows that $(\tau^u_r)_{r\in\N}$ is monotone non-decreasing. Next, the support
of $\#\lambda$  is contained in the interval $[\lf,\uf]$ with $\lf$ and $\uf$
as its left and right endpoints, and we know all moments of $\#\lambda$. Therefore 
\eqref{th1-1}-\eqref{th1-3} follow from \cite[Theorem 3.3, p. 3379]{lass-pams}.
\end{proof}

Even though $\H_r(x;\,\#\lambda)$ may not be positive definite
we still have
\[\sup_{a}\,\{\,a:\:\H_r(x;\,\#\lambda)\,\succeq\,a\,\H_r(\#\lambda)\,\}\,=\,
\lambda_{\min}(\H_r(x;\,\#\lambda),\H_r(\#\lambda)).\]
This is because $\H_r(x-a;\,\#\lambda)=\H_r(x;\,\#\lambda)-a\,\H_r(\#\lambda)$; see Lemma \ref{lem-aux}.\\

So after one has reduced the $n$-dimensional problem \eqref{pb-initial-inf}
to the one-dimensional problem \eqref{aux1}, Theorem \ref{th1} shows that one thus has to handle
Hankel moment matrices of size $r$ only whereas $\otheta_r$ requires to handle
Hankel-like moment matrices of size ${n+r\choose r}$. However the moment {\em information}
needed to build up the moment and localizing matrices $\H_r(\#\lambda)$, $\H_r(x;\,\#\lambda)$,
still requires computing 
$\int_\om f(\x)^{2k}d\lambda$ with $k\leq 2r$ (hence handling $n$-variate moments up to degree $2d$).
But once this has been done, the eigenvalue problem is much easier.

\subsection{On finite convergence}

For same reasons as for the convergence $\underline{\theta}_r\downarrow\lf$, in \eqref{th1-2} 
the convergence $\tau^\ell_r\downarrow\lf$ 
as $r\to\infty$, is only \emph{asymptotic} and \emph{not} finite in general (and similarly for $\tau^u_r\uparrow\lf$ in \eqref{th1-4}). 
Indeed the rationale behind \eqref{aux1}  is to approximate the Dirac measure 
$\delta_{z^*}$ at the minimizer $z^*=\lf\in\R$ by a measure of the form $\sigma d\#\lambda$,
and such an approximation cannot be exact because the pushforward measure 
$\#\lambda$ has no atom if $\om$ is not finite.

An exception is when $\om$ is a finite set (e.g., the discrete hypercube $\{0,1\}^n$ or $\{-1,1\}^n$)
and $\lambda$ is the counting measure 
$\sum_{\v\in\om} \delta_\v$, with $\delta_\v$ being the Dirac  measure at $\v\in\om$.
Introduce the finite set $\mathcal{Z}:=\{f(\v): \v\in\om\}\subset\R$.
\begin{lem} 
\label{lem-finite}
Let $\om\subset\R^n$ be a finite set and let $\lambda$ be the counting measure on $\om$.
Then finite convergence takes place at most at $r=\bar{r}=\mathrm{card}(\mathcal{Z})-1$. That is, 
$\tau^\ell_{\bar{r}}=\lf$ and $\tau^u_{\bar{r}}=\uf$.
\end{lem}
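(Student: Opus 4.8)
The plan is to exploit the fact that when $\om$ is finite and $\lambda$ is the counting measure, the pushforward $\#\lambda$ is a \emph{finitely supported} measure on $\R$ whose support is exactly $\mathcal{Z}=\{f(\v):\v\in\om\}$. Indeed, for $C\in\mathcal{B}(\R)$ we have $\#\lambda(C)=\mathrm{card}\{\v\in\om: f(\v)\in C\}$, so $\#\lambda=\sum_{z\in\mathcal{Z}} m_z\,\delta_z$ where $m_z=\mathrm{card}\{\v\in\om:f(\v)=z\}\geq 1$. Writing $N:=\mathrm{card}(\mathcal{Z})=\bar r+1$ and ordering the atoms $z_1<z_2<\cdots<z_N$, we have $\lf=z_1=\min\mathcal{Z}$ and $\uf=z_N=\max\mathcal{Z}$, consistent with \eqref{equiv}.

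The first key step is to recall the SOS-dual formulation \eqref{aux1}: $\tau^\ell_r=\inf_\sigma\{\int z\,\sigma\,d\#\lambda:\int\sigma\,d\#\lambda=1,\ \sigma\in\Sigma[z]_r\}$, and likewise $\tau^u_r$ with $\sup$. For $r=\bar r$, the cone $\Sigma[z]_{\bar r}$ contains every SOS polynomial of degree at most $2\bar r=2(N-1)$; in particular it contains the square of the degree-$(N-1)$ Lagrange interpolation polynomial $\ell_1(z):=\prod_{j=2}^{N}\frac{z-z_j}{z_1-z_j}$, which satisfies $\ell_1(z_1)=1$ and $\ell_1(z_j)=0$ for $j\geq 2$. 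The second key step is the explicit construction: take $\sigma:=\ell_1^2/\big(m_{z_1}\ell_1(z_1)^2\big)=\ell_1^2/m_{z_1}$. Then $\int\sigma\,d\#\lambda=\sum_{z\in\mathcal Z} m_z\,\sigma(z)=m_{z_1}\sigma(z_1)=1$, and $\int z\,\sigma\,d\#\lambda=\sum_{z} m_z\,z\,\sigma(z)=m_{z_1}z_1\sigma(z_1)=z_1=\lf$. Since $\sigma\in\Sigma[z]_{\bar r}$ is feasible for \eqref{aux1} with objective value $\lf$, we get $\tau^\ell_{\bar r}\leq\lf$; combined with $\tau^\ell_{\bar r}\geq\lf$ from \eqref{th1-1}, we conclude $\tau^\ell_{\bar r}=\lf$. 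The symmetric argument with $\ell_N(z):=\prod_{j=1}^{N-1}\frac{z-z_j}{z_N-z_j}$ and $\sigma:=\ell_N^2/m_{z_N}$ gives $\tau^u_{\bar r}=\uf$.

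Alternatively, and perhaps more in the spirit of Theorem~\ref{th1}, one can argue directly on the moment and localizing matrices: the moment sequence $\boldsymbol{\#\lambda}$ comes from an $N$-atomic measure, so $\H_r(\#\lambda)$ has rank exactly $N=\bar r+1$ for all $r\geq\bar r$ and is strictly positive definite of full size precisely at $r=\bar r$. At $r=\bar r$ one may then read off from $\H_{\bar r}(x-\lf;\#\lambda)=\H_{\bar r}(\#\lambda(\cdot-\lf))$, the moment matrix of the shifted measure $\sum_z m_z\delta_{z-\lf}$, which is supported on $[0,\infty)$ with $0$ an atom; hence $\H_{\bar r}(x;\#\lambda)-\lf\,\H_{\bar r}(\#\lambda)=\H_{\bar r}(x-\lf;\#\lambda)\succeq0$ is singular (a Lagrange square at $\lf$ lies in its kernel), which shows $\lambda_{\min}(\H_{\bar r}(x;\#\lambda),\H_{\bar r}(\#\lambda))=\lf$, i.e. $\tau^\ell_{\bar r}=\lf$ via \eqref{th1-3}; symmetrically for $\tau^u_{\bar r}$.

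The only mild obstacle is bookkeeping around multiplicities $m_z$ and making sure the degree budget $2\bar r$ really accommodates the Lagrange squares when $\mathcal{Z}$ has $N$ points (it does, since $\deg\ell_j^2=2(N-1)=2\bar r$); once that is checked the argument is immediate. Note also that $\bar r=\mathrm{card}(\mathcal Z)-1$ can be strictly smaller than $\mathrm{card}(\om)-1$ when $f$ is not injective on $\om$, which is why the statement is phrased in terms of $\mathcal Z$; this is automatically handled by the construction above since everything is expressed through $\#\lambda$ and its atoms.
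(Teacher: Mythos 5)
Your argument is correct and is essentially the paper's own proof: both construct the squared Lagrange interpolation polynomial at $z^*=\lf$ (of degree $\bar r=\mathrm{card}(\mathcal Z)-1$), normalize by the atom weight so that it is feasible for \eqref{aux1} with objective value $\lf$, and conclude via the lower bound $\tau^\ell_{\bar r}\geq\lf$ from Theorem \ref{th1} (symmetrically for $\tau^u_{\bar r}=\uf$). Your alternative sketch via the singularity of $\H_{\bar r}(x-\lf;\#\lambda)$ is a reasonable matrix-side restatement, but the main argument coincides with the paper's.
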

\begin{proof}
The support of the pushforward measure $\#\lambda$ is the finite set $\mathcal{Z}$
with smallest element $z^*:=\lf\in\mathcal{Z}$ and largest element $\uf\in\mathcal{Z}$. 
In addition, the pushforward measure $\#\lambda$ satisfies $\#\lambda=\sum_{z\in\mathcal{Z}}a_z\,\delta_{z}$,
where:
\begin{eqnarray*}
0\,<\,a_z\,=\,\lambda(f^{-1}(\{z\}))&=&\lambda(\{\v\in\om: f(\v)=z\})\\
&=&\mathrm{card}(\{\v\in\om: f(\v)=z\}),\quad z\in\mathcal{Z}.
\end{eqnarray*}
Let $p\in\R[z]_{\bar{r}}$ be the degree-$\bar{r}$ polynomial 
$z\mapsto \prod_{a\in\mathcal{Z}; a\neq z^*} \frac{z-a}{z^*-a}$, so that
$p(z)=0$ for all $z^*\neq z\in\mathcal{Z}$, and $p(z^*)=1$.
Then letting $\sigma^*:=a_{z^*}^{-1}p^2\in\Sigma[z]_{\bar{r}}$, 
one obtains $\int\sigma^* d\#\lambda=\sum_{z\in\mathcal{Z}}a_z\,\sigma^*(z)=p(z^*)^2=1$, and 
$\int z\sigma^*\,d\#\lambda\,=\,z^*=\lf$. This proves that $\sigma^*$ is an optimal solution of \eqref{aux1}
when $r=\bar{r}$ (recall that $\tau^\ell_r\geq\lf$ for all $r$) and so $\tau^\ell_{\bar{r}}=\lf$ (and with a similar argument, $\tau^u_{\bar{r}}=\uf$).
\end{proof}
So Lemma \ref{lem-finite} states that $\bar{r}=\mathrm{card}(\mathcal{Z})-1$ determines when finite convergence is guaranteed,
i.e., $\tau^\ell_{\bar{r}}=\lf$. In other words, the larger is the set $\mathcal{Z}$ the more difficult is to
compute $\lf$ (or $\uf$).

\subsection{Link with tri-diagonal matrices}

Let $(T_j)_{j\in\N}$ be a family of orthonormal polynomials with respect to $\#\lambda$. 
For instance, such a family can be computed from the moments $(\#\lambda_k)_{k\in\N}$ as follows.
$T_0(x)=1=D_0(x)/\#\lambda_0$ for all $x\in\R$. Then compute the degree-one  polynomial:
\[x\mapsto D_1(x)={\rm det}\left[
\begin{array}{cc}
  \#\lambda_0& \#\lambda_1   \\
 1 & x\end{array}\right]\,=\,\#\lambda_0\,x-\#\lambda_1\]
and normalize $T_1(x)=a\,D_1(x)$ to obtain $\int T_1(x)^2d\#\lambda=1$, i.e.,
$T_1(x)=a\,D_1(x)$ with $a=\#\lambda_0^{-1/2}(\#\lambda_0\#\lambda_2-\#\lambda_1^2)^{-1/2}$. 
Next, to obtain $T_2$ compute
\[x\mapsto D_2(x)={\rm det}\left[
\begin{array}{ccc}
  \#\lambda_0& \#\lambda_1 &\#\lambda_2   \\
  \#\lambda_1& \#\lambda_2 &\#\lambda_3   \\
 1 & x &x^2\end{array}\right]\]
 and again normalize $T_2(x)=a\,D_2(x)$ to obtain $\int T_2^2d\#\lambda=1$, etc.
Next, the orthonormal polynomials satisfy the so-called {\em three-term recurrence} relation: 
\begin{equation}
\label{three-term}
x\,T_j(x)\,=\,a_j\,T_{j+1}(x)+b_j\,T_j(x)+a_{j-1}\,T_{j-1}(x),\quad\forall x\in\R,\quad j\in\N,\end{equation}
where $a_j=(d_j\,d_{j+2}/d_{j+1}^2)^{1/2}$, $b_j=\int x T_j(x)^2\,d\#\lambda$, and
\[d_j={\rm det}\left[
\begin{array}{ccccc}
  \#\lambda_0& \#\lambda_1 & \cdots &\cdots&\#\lambda_{j-1}   \\
  \#\lambda_1& \#\lambda_2 &\cdots &\cdots&\#\lambda_{j}   \\
 \cdots& \cdots &\cdots &\cdots&\cdots\\
 \#\lambda_{j-1} & \#\lambda_j&\dots&\cdots& \#\lambda_{2j-2}\end{array}\right],\quad j\in\N.\]
 The tri-diagonal infinite matrix:
 \begin{equation}
 \label{jacobi}
 J=\left[\begin{array}{cccccc}
 b_0 &a_0&0&\cdots&\cdots&0\\
 a_0&b_1 &a_1 &0 &\cdots&0\\
 0&a_1&b_2 &a_2 &\cdots &0\\
 0&0&\cdots&\cdots&\cdots&0\end{array}\right]\end{equation}
 is called the {\em Jacobi matrix} associated with the orthonormal polynomials $(T_j)_{j\in\N}$;
 see e.g. Dunkl and Xu \cite[pp. 10--11]{dunkl}.
 \begin{prop}
 \label{prop1}
Expressed in the basis of orthonormal polynomials $(T_j)_{j\in\N}$, the moment matrix $\hat{\H}_r(\#\lambda)$
is the identity matrix while the $(r+1)\times (r+1)$ localizing matrix $\hat{\H}_r(x;\,\#\lambda)$ is the $r$-truncation
\begin{equation}
\label{r-truncation}
J_r=\left[\begin{array}{cccccc}
 b_0 &a_0&0&\cdots&\cdots&0\\
 a_0&b_1 &a_1 &0 &\cdots&0\\
 0&a_1&b_2 &a_2 &\cdots &0\\
  0&\cdots&\cdots&\cdots&\cdots &0\\
 0&0&\cdots&a_{r-2}&b_{r-1}&a_{r-1}\\
 0&0&\cdots&\cdots&a_{r-1}&b_r
   \end{array}\right]\end{equation}
of the Jacobi matrix \eqref{jacobi}.
 \end{prop}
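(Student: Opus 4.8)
The plan is to work directly with the definitions of the moment and localizing matrices after a change of basis in $\R[x]_r$. Concretely, let $(e_j)_{j=0}^r$ with $e_j(x)=x^j$ be the monomial basis, and let $(T_j)_{j=0}^r$ be the orthonormal basis for $\#\lambda$. Write $P$ for the (upper-triangular, invertible) change-of-basis matrix expressing the $T_j$'s in terms of the $e_j$'s. The key observation is the bilinear-form interpretation of these matrices: for any $p,q\in\R[x]_r$ with coordinate vectors $\mathbf{p},\mathbf{q}$ in the monomial basis, one has $\mathbf{p}^\top \H_r(\#\lambda)\,\mathbf{q}=\int p\,q\,d\#\lambda$, and $\mathbf{p}^\top \H_r(x;\,\#\lambda)\,\mathbf{q}=\int x\,p(x)\,q(x)\,d\#\lambda$. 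These identities follow termwise from the definitions of $\H_r(\#\lambda)$ and $\H_r(x;\,\#\lambda)$ given in the excerpt (the localizing-matrix identity is the special case $g(x)=x$ of the remark that $\H_r(g;\bphi)$ is the moment matrix of $g\,d\phi$). Hence in the new basis the two matrices become $\hat{\H}_r(\#\lambda)=P^\top \H_r(\#\lambda)\,P$ and $\hat{\H}_r(x;\,\#\lambda)=P^\top \H_r(x;\,\#\lambda)\,P$, whose $(i,j)$ entries are $\int T_i T_j\,d\#\lambda$ and $\int x\,T_i(x)\,T_j(x)\,d\#\lambda$ respectively.

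The first entry is immediate: orthonormality of the $T_j$'s gives $\int T_i T_j\,d\#\lambda=\delta_{ij}$, so $\hat{\H}_r(\#\lambda)=I_{r+1}$. For the second matrix I would substitute the three-term recurrence \eqref{three-term} for $x\,T_j(x)$ into $\int x\,T_i T_j\,d\#\lambda$, obtaining
\[
\int x\,T_i(x)\,T_j(x)\,d\#\lambda\,=\,a_j\!\int T_i T_{j+1}\,d\#\lambda+b_j\!\int T_i T_j\,d\#\lambda+a_{j-1}\!\int T_i T_{j-1}\,d\#\lambda,
\]
and then apply orthonormality again: this equals $a_j\delta_{i,j+1}+b_j\delta_{i,j}+a_{j-1}\delta_{i,j-1}$. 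Thus the $(i,j)$ entry vanishes unless $|i-j|\le 1$, equals $b_j$ on the diagonal, and equals $a_{\min(i,j)}$ on the first off-diagonals — which is exactly the entry pattern of $J_r$ in \eqref{r-truncation} (symmetry is automatic, and for $j=0$ the term $a_{j-1}T_{j-1}$ is absent by the usual convention $T_{-1}\equiv 0$).

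I expect the only real care to be in bookkeeping rather than in any genuine obstacle: one should verify that the bilinear-form identities $\mathbf{p}^\top \H_r\,\mathbf{q}=\int p\,q\,d\#\lambda$ hold with the right indexing conventions (the matrices are $(r+1)\times(r+1)$, indexed $0,\dots,r$), and that the change of basis $P$ is genuinely invertible so that $\hat{\H}_r$ is legitimately the representation of the same form in the new basis — this uses that $\#\lambda$ has infinite support (true when $\om$ is not finite) or at least $\operatorname{card}(\mathcal Z)\ge r+1$, guaranteeing $\H_r(\#\lambda)\succ0$ and hence that the first $r+1$ orthonormal polynomials exist with $\deg T_j=j$. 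A second point worth a sentence is that the $a_j,b_j$ appearing in \eqref{three-term} are exactly the entries of the Jacobi matrix \eqref{jacobi}, so no separate computation of the $d_j$-formulas is needed — the recurrence relation is taken as given. Modulo these remarks the proof is just the two displayed computations above.
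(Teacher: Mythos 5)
Your proposal is correct and follows essentially the same route as the paper: compute the entries $\int T_iT_j\,d\#\lambda$ and $\int x\,T_iT_j\,d\#\lambda$ in the orthonormal basis, use orthonormality for the identity claim, and substitute the three-term recurrence \eqref{three-term} plus orthonormality to get the tri-diagonal pattern $b_j$ on the diagonal and $a_{\min(i,j)}$ on the first off-diagonals. Your extra remarks (the $P^\top\H_rP$ change-of-basis justification and the positivity of $\H_r(\#\lambda)$ guaranteeing existence of the $T_j$'s) are sound bookkeeping that the paper leaves implicit.
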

 \begin{proof}
 That the moment matrix $\hat{\H}_r(\#\lambda)$ expressed in the basis $(T_j)_{j\in\N}$ 
is the identity matrix follows from
\[\hat{\H}_r(\#\lambda)(i,j)\,=\,\int T_i(x)\,T_j(x)\,d\#\lambda\,=\,\delta_{i=j},\quad \forall i,j=0,1,\ldots,r\]
Next, in this basis the localizing matrix $\hat{\H}_r(x;\,\#\lambda)$ 
associated with $\#\lambda$ and the polynomial $x\mapsto x$, reads:
 \begin{eqnarray*}
 \hat{\H}_r(x;\,\#\lambda)(i,j)&=&\int x\,T_i(x)\,T_j(x)\,d\#\lambda\\
 &=&\underbrace{a_i\,\int T_{i+1}(x)\,T_j(x)\,d\#\lambda}_{=a_i\,\delta_{i+1=j}}
+ \underbrace{b_{i}\,\int T_{i}(x)T_j(x)\,d\#\lambda}_{b_i\,\delta_{i=j}}\\
&&+\underbrace{a_{i-1}\,\int T_{i-1}(x)T_j(x)\,d\#\lambda}_{a_{i-1}\,\delta_{i-1=j}}\\
&=&\mbox{$0$ if $j\not\in\{i-1,i,i+1\}$,}\end{eqnarray*}
for all $i,j=0,1,\ldots,r$, where we have used \eqref{three-term}.
Hence $\hat{\H}_r(x;\,\#\lambda)$ is a tri-diagonal matrix where at row $i$ the three elements 
are $(a_{i-1},b_i,a_{i+1})$. Therefore, $\hat{\H}_r(x;\#\lambda)$ is the $r$-truncation of the
 Jacobi matrix \eqref{jacobi}.
\end{proof}
 As a consequence we thus obtain:
 \begin{cor}
 \label{cor-jacobi}
 Let $\tau^\ell_r$ and $\tau^u_r$ be as in Theorem \ref{th1} and let 
 $J_r$ be the tri-diagonal matrix in Proposition \ref{prop1}. Then 
 $\tau^\ell_r=\lambda_{\min} (J_r)$ and $\tau^u_r=\lambda_{\max} (J_r)$. Therefore:
 \begin{equation}
 \label{cor-final}
 \lambda_{\min}(J_r)\,\downarrow\,\lf\mbox{ and } \lambda_{\max}(J_r)\,\uparrow\,\uf\mbox{ as $r\to\infty$.}
 \end{equation}
 Also for every $r\in\N$, $\tau^\ell_r$ (resp. $\tau^u_f$) is the smallest (resp. largest) root of the univariate 
 polynomial $T_{r+1}$. 
 \end{cor}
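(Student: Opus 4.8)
The plan is to combine Proposition~\ref{prop1} with Theorem~\ref{th1} by observing that a congruence change of basis leaves the generalized eigenvalues of a pair of symmetric matrices unchanged, and then to invoke the classical identification of the truncated Jacobi matrix with the companion-type matrix of the orthogonal polynomials.

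First I would make the change of basis explicit. Writing each $T_j$ in the monomial basis produces an upper-triangular invertible matrix $P\in\R^{(r+1)\times(r+1)}$ with $(T_0(x),\dots,T_r(x))^\top = P^\top\,(1,x,\dots,x^r)^\top$. Integrating the rank-one products against $\#\lambda$ gives $\hat{\H}_r(\#\lambda)=P^\top\H_r(\#\lambda)\,P$ and $\hat{\H}_r(x;\,\#\lambda)=P^\top\H_r(x;\,\#\lambda)\,P$. Since $P$ is invertible, the substitution $v\mapsto Pw$ turns the pencil $\H_r(x;\,\#\lambda)v=\tau\,\H_r(\#\lambda)v$ into $\hat{\H}_r(x;\,\#\lambda)w=\tau\,\hat{\H}_r(\#\lambda)w$, so the two pairs share exactly the same generalized spectrum. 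By Proposition~\ref{prop1} the second pair is $(J_r,I)$, whose generalized eigenvalues are the ordinary eigenvalues of the symmetric matrix $J_r$. Hence, using \eqref{th1-3}--\eqref{th1-4}, $\tau^\ell_r=\lambda_{\min}(J_r)$ and $\tau^u_r=\lambda_{\max}(J_r)$; combining this with the monotone convergence \eqref{th1-2} of Theorem~\ref{th1} yields \eqref{cor-final}.

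For the last assertion I would recall the standard fact that the eigenvalues of the $(r+1)\times(r+1)$ matrix $J_r$ are precisely the $r+1$ (real, simple) zeros of $T_{r+1}$. Concretely, if $x^\star$ is a root of $T_{r+1}$, then reading the three-term recurrence \eqref{three-term} row by row shows that the nonzero vector $(T_0(x^\star),\dots,T_r(x^\star))^\top$ satisfies $J_r\,v=x^\star v$; the last row is exactly $x^\star T_r(x^\star)=b_r T_r(x^\star)+a_{r-1}T_{r-1}(x^\star)$, valid because $a_r T_{r+1}(x^\star)=0$. Since $T_{r+1}$ is an orthogonal polynomial of degree $r+1$ it has $r+1$ distinct real zeros, which therefore exhaust the spectrum of $J_r$. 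In particular $\lambda_{\min}(J_r)$ (resp.\ $\lambda_{\max}(J_r)$) is the smallest (resp.\ largest) zero of $T_{r+1}$, which together with the first part completes the proof.

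The only real obstacle is a minor technical point: the congruence argument requires $\H_r(\#\lambda)\succ0$, so that the orthonormal family $T_0,\dots,T_r$ and the matrix $P$ are well defined. This holds whenever $\mathrm{supp}(\#\lambda)$ has at least $r+1$ points, in particular in the generic (non-finite $\om$) setting, and it is the same implicit hypothesis already underlying Proposition~\ref{prop1}; in the finitely supported case one simply restricts to $r\le\bar r$ as in Lemma~\ref{lem-finite}. Everything else is classical: the invariance of generalized eigenvalues under congruence, and the Jacobi-matrix/orthogonal-polynomial correspondence.
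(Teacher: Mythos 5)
Your proof is correct and follows essentially the same route as the paper: combine Proposition \ref{prop1} with Theorem \ref{th1} (the congruence $P^\top\H_r(\cdot)P$ you spell out is exactly the implicit content of ``expressed in the orthonormal basis''), and then use the classical Jacobi-matrix/orthogonal-polynomial correspondence for the last claim. The only difference is that the paper simply cites Dunkl and Xu \cite[Theorem 1.3.12]{dunkl} for that correspondence, whereas you reprove it via the eigenvector $(T_0(x^\star),\ldots,T_r(x^\star))^\top$, which is a correct and self-contained substitute.
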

 \begin{proof}
  Follows from Theorem \ref{th1} and the definition of $\tau^\ell_r$ and $\tau^u_r$. The last statement can 
  be found in Dunkl and Xu \cite[Theorem 1.3.12]{dunkl}.
 \end{proof}

\begin{rem}
\label{sphere}
{\rm The use of an othonormal polynomial basis to reduce the 
initial $n$-dimensional problem \eqref{pb-initial-inf} to a standard 
($n$-dimensional) eigenvalue problem was already proposed in \cite{lass-siopt} and 
in de Klerk et al. \cite{las-3} and de Klerk and Laurent \cite{las-4}
but for the original $n$-dimensional problem and the reference measure $\lambda$
(and not on $\R$ for the pushforward measure $\#\lambda$).

In addition, in \cite{0} de Klerk and Laurent have used the univariate problem $\min\{ x: x\in [-1,1]\,\}$
as a building block to prove the $O(1/r^2)$ rate of convergence for the bounds
\eqref{pb-initial-inf} and \eqref{pb-initial-sup} in the multivariate case of the Sphere, the unit box and for some different reference measures $\lambda$.
They observed that if $f$ is the {\em univariate} polynomial $x$ then solving the 
resulting eigenvalue problem is computing the smallest eigenvalue of the Jacobi matrix associated with $\lambda$
(or equivalently, the smallest root of a certain orthogonal polynomial) as in Corollary \ref{cor-jacobi}. For specific 
reference measures $\lambda$, the associated  orthogonal polynomials have been 
well-studied (e.g. Chebyshev or Legendre polynomials), in particular 
the asymptotic behavior of their smallest (or largest) root used by the authors in \cite{0}. For more details the interested reader is referred to \cite{las-1,las-2,las-3,las-4,0}. However, in Corollary \ref{cor-jacobi} the underlying univariate problem
$\min\{\,x: x \in {\rm supp}(\#\lambda)\,\}$ in \eqref{equiv}
is {\em equivalent} to the original multivariate problem \eqref{pb-opt}. The price to pay is that the density of 
$\#\lambda$ is not known explicitly and makes the analysis of the rate of convergence more intricate.
}\end{rem}
\subsection*{Discussion}
As already mentioned, computing the entries of $\H_r(\#\lambda)$ (and hence of $\H(x;\,\#\lambda)$ as well) is easy but tedious for large $n$.
For a fixed $r\in\N$, and once the moment matrix $\H_r(\#\lambda)$ has been computed,
computing the scalar $\tau^u_r$ or $\tau^\ell_r$ in \eqref{relax-inf}-\eqref{relax-sup} is definitely easier than computing 
$\otheta_r$ or $\utheta_r$ as in the former one handles univariate moment matrices of size $r$ instead of
$n$-variate moment matrices of size ${n+d\choose n}$ in the latter. 
However we have not proved any rate of convergence
for $\tau^\ell_r\downarrow \lf$ whereas $\otheta_r\downarrow \lf$ at a rate $O(1/r^2)$  for some simple sets
$\om$ and appropriate measures $\lambda$. 
As mentioned earlier, convergence analysis of the sequence $(\tau^\ell_r)_{r\in\N}$ is difficult because
we do {\em not} have an explicit expression of the density of $\#\lambda$ w.r.t. Lebesgue measure
on $[\lf,\uf]$.

For illustration purpose, for $r=5,6$, we have considered four toy problems in $n=2$ variables
to compare the upper bounds $\tau^\ell_r$ on $\lf$ obtained in \eqref{relax-inf}
with the upper bounds $\otheta_r$ obtained in \eqref{pb-initial-inf} as
described in de Klerk and Laurent \cite{las-3}. Hence for the same $r$,
the former are obtained by solving eigenvalue problems with matrices of size $6$ for $r=5$
(resp. size $7$ for $r=6$) as opposed to matrices of size ${2+r\choose 2}=21$ for $r=5$ (resp. size $28$ for $r=6$)
for the latter.\\

\noindent
{\bf Motzkin polynomial:} $f(\x)=64\,(x_1^4x_2^2+x_1^2x_2^4)-48\,x_1^2x^2_2+1$\\
{\bf Matyas function:} $f(\x)=26\,(x_1^2+x_2^2)-48\,x_1x_2$\\
{\bf Booth function:} $f(\x)=(10\,x_1+20\,x_2-7)^2+(20\,x_1+10\,x_2-5)^2$\\
{\bf Three-hum-camel function:} $f(\x)=5^6x_1^6/6-1.05*5^4x_1^4+50\,x_1^2+245\,x_1x_2+25\,x_2^2$

\begin{table}[h] 
\begin{tabular}{|l|c|c|c|c|c|}
\hline
&&&&\\
\mbox{pb}& Moztkin & Matyas & Booth & Three-hump camel\\
&&&&\\
\hline 
$\otheta_5$&0.801 &3.69 & 69.81&9.58 \\
\hline 
$\tau^\ell_5$& 0.873&2.06 &56.64 &15.07 \\
\hline
&&&&\\
\hline 
$\otheta_6$& 0.801&2.99 &63.54 & 4.439\\
\hline 
$\tau_6^\ell$&0.808&1.68&45.49&12.68\\
\hline 
\end{tabular}
\caption{Comparing $\tau^\ell_r$ and $\otheta_r$ on 4 toy examples\label{table-1}}
\end{table}

In Table \ref{table-1} are displayed the results. Except for the Motzkin polynomial the bounds $\tau^\ell_r$ are comparable and even better
than the bounds $\otheta_r$.

\subsection*{Convergence rate}
After the initial version of this paper was released on a public repository, Laurent and Slot \cite{laurent-slot} have analyzed
the convergence $\tau^\ell_r\downarrow\lf$ as $r\to\infty$, of Theorem \ref{th1}. Quite remarkably
they prove a $O(\log^2r/r^2)$ rate of convergence under a weak geometric condition on $\om$ 
(satisfied for the ``simple" sets $\om$). Their proof is in the same vein and spirit as the one in Slot and Laurent \cite{las-5} where the authors prove convergence $\underline{\theta}_r\downarrow \lf$ at a
$O(\log^2r/r^2)$ rate for general convex bodies. For more details the interested reader is referred to \cite{laurent-slot}.


\section{Appendix}

\begin{lem}
\label{lem-aux}
Let $\H_r(x;\,\#\lambda)$ be the Hankel matrix associated with $\#\lambda$ and the polynomial $x\mapsto x$.
Then 
\begin{equation}
\label{lem-aux-1}
\lambda_{\min}(\H_r(x;\,\#\lambda),\H_r(\#\lambda))\,=\,\sup_a\,\{\,a:\:\H_r(x;\,\#\lambda)\,\succeq\,a\,\H_r(\#\lambda)\,\}.
\end{equation}
\begin{proof}
Let $c<\lf$ be arbitrary, fixed. As $x\geq \lf>c$ for all $x$ in the support of $\#\lambda$,
it follows that $\H_r(x-c;\,\#\lambda)\succ0$, and since
$\H_r(\#\lambda)\succ0$, 
\[\lambda_{\min}(\H_r(x-c;\,\#\lambda),\H_r(\#\lambda))\,=\,\sup_a\,\{\,a:\:\H_r(x-c;\,\#\lambda)\,\succeq\,a\,\H_r(\#\lambda)\,\}.\]
Notice also that since $\H_r(x-c;\,\#\lambda)=\H_r(x;\,\#\lambda)-c\,\H_r(\#\lambda)$:
\[\lambda_{\min}(\H_r(x-c;\,\#\lambda),\H_r(\#\lambda))\]
\begin{eqnarray*}
&=&\inf_a\,\{\,a:\:\exists p,\:\H_r(x-c;\,\#\lambda)\,p\,=\,a\,\H_r(\#\lambda)\,p\}\\
&=&\inf_a\,\{\,a:\:\exists p,\:\H_r(x;\,\#\lambda)\,p\,=\,(a+c)\,\H_r(\#\lambda)\,p\}\\
&=&-c+\inf_a\,\{\,a:\:\exists p,\:\H_r(x;\,\#\lambda)\,p\,=\,a\,\H_r(\#\lambda)\,p\}\\
&=&-c+\lambda_{\min}(\H_r(x;\,\#\lambda),\H_r(\#\lambda)).
\end{eqnarray*}
Next, 
\begin{eqnarray*}
-c+\lambda_{\min}(\H_r(x;\,\#\lambda),\H_r(\#\lambda))&=&
\lambda_{\min}(\H_r(x-c;\,\#\lambda),\H_r(\#\lambda))\\
&=&\sup_a\,\{a:\H_r(x;\,\#\lambda)\,\succeq\,(a+c)\,\H_r(\#\lambda)\}\\
&=&-c+\sup_a\,\{a:\H_r(x;\,\#\lambda)\,\succeq\,a\,\H_r(\#\lambda)\},
\end{eqnarray*}
and the proof is complete.
\end{proof}
\end{lem}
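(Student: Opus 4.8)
The plan is to deduce the identity from the elementary spectral fact that $\lambda_{\min}(\M)=\sup_a\{a:\M\succeq aI\}$ for \emph{every} real symmetric matrix $\M$, by transporting the matrix pencil $(\H_r(x;\,\#\lambda),\,\H_r(\#\lambda))$ to an ordinary eigenvalue problem via a congruence. The one ingredient needed beyond this is that $\mathbf{C}:=\H_r(\#\lambda)\succ0$: the pushforward $\#\lambda$ is supported on infinitely many points of $[\lf,\uf]$ whenever $f$ is non-constant (the constant case being trivial), so its Hankel moment matrix of any fixed size is positive definite, and hence admits a Cholesky factorization $\mathbf{C}=LL^{\top}$ with $L$ invertible.

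Writing $\A:=\H_r(x;\,\#\lambda)$, I would then argue as follows. For each scalar $a$ the semidefinite inequality $\A\succeq a\,\mathbf{C}$ is equivalent, after congruence by $L^{-1}$, to $L^{-1}\A L^{-\top}\succeq aI$; taking suprema over $a$ and invoking the elementary spectral identity for the symmetric matrix $\M:=L^{-1}\A L^{-\top}$ yields $\sup_a\{a:\A\succeq a\,\mathbf{C}\}=\lambda_{\min}(\M)$. Moreover $\M y=ay$ becomes, after multiplying by $L$ and setting $\p:=L^{-\top}y$, exactly $\A\p=a\,\mathbf{C}\p$, with $\p\ne0$ iff $y\ne0$; hence the eigenvalues of $\M$ coincide with the generalized eigenvalues of the pair $(\A,\mathbf{C})$, so $\lambda_{\min}(\M)=\lambda_{\min}(\A,\mathbf{C})$, and combining the two displays proves the claim.

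I do not expect a real obstacle: the only delicate point is the invertibility of $\mathbf{C}=\H_r(\#\lambda)$, without which the right-hand supremum need not be attained (it can be $+\infty$) and the congruence collapses, so that is the step I would be careful to justify. A second route, more in keeping with the paper's moment-theoretic style and avoiding Cholesky, is to fix any $c<\lf$, observe that $\H_r(x-c;\,\#\lambda)$ is the moment matrix of the genuinely positive measure $(x-c)\,d\#\lambda$ and is therefore $\succ0$, apply \eqref{gen-eig-sdp} to the positive definite pencil $(\H_r(x-c;\,\#\lambda),\mathbf{C})$, and then translate both the generalized eigenvalue and the supremum back by $-c$ using $\H_r(x-c;\,\#\lambda)=\A-c\,\mathbf{C}$; each translation is immediate from the respective definition.
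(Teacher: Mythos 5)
Your proposal is correct, and your primary argument is genuinely different from the paper's. The paper fixes an arbitrary $c<\lf$, notes that $x-c>0$ on ${\rm supp}(\#\lambda)$ so that $\H_r(x-c;\,\#\lambda)\succ0$, applies the identity \eqref{gen-eig-sdp} (stated only for a pair of positive definite matrices) to the shifted pencil, and then translates both the generalized eigenvalue and the supremum back by $-c$ using $\H_r(x-c;\,\#\lambda)=\H_r(x;\,\#\lambda)-c\,\H_r(\#\lambda)$; the whole point of that detour is that $\H_r(x;\,\#\lambda)$ itself may be indefinite. Your Cholesky/congruence route bypasses the shift entirely: writing $\mathbf{C}=LL^{\top}$, the equivalences $\A\succeq a\,\mathbf{C}\iff L^{-1}\A L^{-\top}\succeq aI$ and $\A\p=a\,\mathbf{C}\p\iff L^{-1}\A L^{-\top}y=ay$ (with $\p=L^{-\top}y$) reduce everything to the elementary identity $\lambda_{\min}(\M)=\sup\{a:\M\succeq aI\}$ for an arbitrary symmetric $\M$, so you get the lemma for any symmetric $\A$ paired with a positive definite $\mathbf{C}$, without invoking \eqref{gen-eig-sdp} (which the paper does not prove) and without any measure-theoretic input beyond $\H_r(\#\lambda)\succ0$. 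What the paper's route buys is that it stays entirely within the moment/localizing-matrix calculus and facts already quoted; what yours buys is a shorter, more self-contained and more general linear-algebra statement. Your second suggested route is essentially verbatim the paper's proof. One small caveat: both arguments need $\H_r(\#\lambda)\succ0$, which can fail when ${\rm supp}(\#\lambda)$ has at most $r$ points (e.g., $\om$ finite, as in the paper's Lemma \ref{lem-finite}); your justification via ``infinitely many support points when $f$ is non-constant'' implicitly uses the paper's standing assumption that $\om$ is compact with nonempty interior (and $\lambda$ has full support), so it is adequate here, but it is worth stating that restriction explicitly.
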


\section{Conclusion}

We have exhibited a strong (and perhaps surprising) connection between global optimization and 
spectral analysis of tri-diagonal univariate moment matrices  (equivalently,
roots of some sequence of univariate orthogonal polynomials).
Essentially computing the global minimum (resp. maximum) of a function
$f$ on a compact set $\om\subset\R^n$ reduces to a \emph{one-dimensional} problem, namely
computing the limit of the smallest
(resp. largest) eigenvalue of a sequence of tri-diagonal moment matrices whose size $r$
is independent of the dimension $n$. Of course the entries of 
these matrices require computing integrals $\int_\om f(x)^kd\lambda$,
$k\in\N$, for some choice of a measure $\lambda$ whose support is exactly 
$\om$. When $\om$ is a simple set then this is theoretically easy but becomes 
tedious for large $n$.  

On the theoretical side, the question of how fast such bounds converge to the global minimum
was not addressed in this paper but was addressed in the recent work of Laurent and Slot \cite{laurent-slot}.
In \cite{laurent-slot} the authors prove that
the bounds converge to the global minimum (resp. global maximum) at a
$O(\log^2r/r^2)$ rate, under a weak geometric condition on $\om$ (satisfied by all 
of simple sets except for the discrete hypercube).

On a more practical side, whether such an approach may provide 
good bounds for not too large $r$ and significant dimension $n$, is a topic of future research.
Interestingly, it is definitely related to a problem in numerical analysis, namely how efficiently 
can be computed integrals $\int_\om f(\x)^k\,d\lambda$ when $f$ is a polynomial, $\om$ is a simple set and 
$\lambda$ an appropriate measure whose support is $\om$.

At last but not least, observe that if the global optimum can be approximated 
at a $O(\log^2r/r^2)$ rate, so far one cannot approximate global minimizers
as we are essentially addressing a related \emph{one-dimensional} problem. Therefore 
our result in this paper suggests the following question: For the particular class of NP-hard 
polynomial optimization problems studied in this paper, is the problem of computing (only) the optimal value 
of same computational complexity as the problem of computing the optimal value \emph{and} global minimizers\,?

\end{document}